\date{}
\renewcommand{\uppercasenonmath}[1]{}
\theoremstyle{plain}
\newtheorem{theorem}{Theorem}[section]
\newtheorem{proposition}[theorem]{Proposition}
\newtheorem{lemma}[theorem]{Lemma}
\newtheorem{corollary}[theorem]{Corollary}
\newtheorem{example}[theorem]{Example}
\newtheorem*{open question}{Open Question}
\newtheorem{definition}[theorem]{Definition}
\theoremstyle{definition}
\newtheorem*{acknowledgement}{Acknowledgement}
\theoremstyle{remark}
\newtheorem{remark}[theorem]{Remark}
\newcommand{\C}{\mathcal{C}}
\newcommand{\Tor}{\mbox{\rm Tor}}
\newcommand{\Id}{\mathrm{Id}}
\def\p{\frak p}
\def\m{\frak m}
\def\Hom{{\rm Hom}}
\def\Ext{{\rm Ext}}
\def\Tor{{\rm Tor}}
\def\Ker{{\rm Ker}}
\def\Im{{\rm Im}}
\def\Coker{{\rm Coker}}
\def\Max{{\rm Max}}
\def\Spec{{\rm Spec}}
\def\Max{{\rm Max}}
\def\Id{{\rm Id}}
\begin{document}
\begin{center}
{\large  \bf Characterizing $S$-projective modules and $S$-semisimple rings by uniformity}

\vspace{0.5cm} Xiaolei Zhang$^{a}$,\ Wei Qi$^{a}$

{\footnotesize $a$.\ School of Mathematics and Statistics, Shandong University of Technology, Zibo 255049, China\\

Corresponding author: Wei Qi, E-mail: qwrghj@126.com\\}
\end{center}

\bigskip
\centerline { \bf  Abstract}
\bigskip
\leftskip10truemm \rightskip10truemm \noindent

Let $R$ be a ring and $S$  a multiplicative subset of $R$.  An $R$-module $P$ is called  uniformly $S$-projective provided that the induced sequence $0\rightarrow \Hom_R(P,A)\rightarrow \Hom_R(P,B)\rightarrow \Hom_R(P,C)\rightarrow 0$ is $u$-$S$-exact for any $u$-$S$-short exact sequence $0\rightarrow A\rightarrow B\rightarrow C\rightarrow 0$. Some characterizations and properties of  $u$-$S$-projective modules are obtained.   The notion of $u$-$S$-semisimple modules is also introduced. A ring $R$ is called a $u$-$S$-semisimple ring provided that any free  $R$-module is $u$-$S$-semisimple. Several characterizations of $u$-$S$-semisimple rings are provided in terms of  $u$-$S$-semisimple modules, $u$-$S$-projective modules, $u$-$S$-injective modules and $u$-$S$-split $u$-$S$-exact sequences.
\vbox to 0.3cm{}\\
{\it Key Words:}    $u$-$S$-projective module, $u$-$S$-injective module, $u$-$S$-split $u$-$S$-exact sequence, $u$-$S$-semisimple ring.\\
{\it 2010 Mathematics Subject Classification:} 16D40, 16D60.

\leftskip0truemm \rightskip0truemm
\bigskip

\section{Introduction}
Throughout this article, $R$ is always  a commutative ring with identity and $S$ is always a multiplicative subset of $R$, that is, $1\in S$ and $s_1s_2\in S$ for any $s_1\in S$ and any $s_2\in S$.  Let $S$  be  multiplicative subset of $R$. In 2002, Anderson and Dumitrescu \cite{ad02} introduced the notion of an \emph{$S$-Noetherian ring} $R$, that is, for any ideal $I$ of $R$ there exists a finitely generated sub-ideal $K$ of $I$ and an element $s\in S$ such that $sI\subseteq K$, i.e., $I/K$ is $u$-$S$-torsion by \cite{z21}. Since then,  $S$-analogues of other well-known classes of rings such as Artinian rings, coherent rings, almost perfect rings, GCD domains and strong Mori domains, were introduced and studied extensively in \cite{dhz19,s19,bh18,l15,lO14,kkl14,S20}.

Now let's go back to the definition of $S$-Noetherian rings. Notice that the element $s\in S$ such that $sI\subseteq K$ is decided by the ideal $I$ for $S$-Noetherian rings. This situation makes it difficult to characterize $S$-Noetherian rings  from the perspective of module-theoretic viewpoint. In order to overcome this difficulty, Qi and Kim et al. \cite{QKWCZ21} recently introduced the notion of \emph{uniformly $S$-Noetherian rings} $R$ for which there exists an element $s\in S$  such that for any ideal $I$ of $R$, $sI \subseteq K$ for some finitely generated sub-ideal $K$ of $I$. They also introduced the notion of  $u$-$S$-injective modules and finally showed that a ring  $R$ is  uniformly $S$-Noetherian if and only if  any direct sum of injective modules is $u$-$S$-injective in the case that $S$ is a regular multiplicative set. Another ``uniform'' case is that of uniformly $S$-von Neumann regular rings introduced by the first author of this paper (see \cite{z21}). The author in \cite{z21} first introduced $u$-$S$-flat modules using $u$-$S$-torsion modules, and then gave the notion of uniformly $S$-von Neumann regular rings extending von Neumann regular rings with uniformity on the multiplicative subset $S$. Finally, he characterized  uniformly $S$-von Neumann regular rings by using  $u$-$S$-flat modules.

The main motivation of this paper is to introduce and study  the uniformly $S$-versions of projective modules and semisimple rings. In Section 2 of this article, we first introduce the notion of $u$-$S$-split $u$-$S$-exact sequence (see Definition \ref{s-split-s-exact-sequence}). Dualizing the $u$-$S$-injective modules, we introduce the notion of $u$-$S$-projective module and show that an $R$-module $P$ is $u$-$S$-projective if and only if $\Ext_R^1(P,M)$ is $u$-$S$-torsion for any $R$-module $M$, if and only if any $u$-$S$-exact sequence ending at $P$ is $u$-$S$-split (see Theorem \ref{s-proj-char}). We also give a new local characterization of projective modules in Proposition \ref{s-flat-loc-char}. In Section 3 of this article, we first give the notion of a $u$-$S$-semisimple module $M$, that is, any $u$-$S$-shortly exact sequence with middle term $M$ is $u$-$S$-split. And then we introduced  the notion of $u$-$S$-semisimple ring over which every free module is $u$-$S$-semisimple.  We prove that a ring $R$ is a $u$-$S$-semisimple ring if and only if every $R$-module is $u$-$S$-semisimple,  if and only if every $u$-$S$-short exact sequence is $u$-$S$-split,  if and only if every $R$-module is $u$-$S$-projective,  if and only if every $R$-module is $u$-$S$-injective (see Theorem \ref{s-semisimple-char-}). By Corollary \ref{s-semisimple-char},  a $u$-$S$-semisimple ring is both uniformly $S$-Noetherian and  uniformly $S$-von Neumann regular.   We also show that if $S$  is  a regular multiplicative subset of $R$, then $R$ is a $u$-$S$-semisimple ring if and only if  $R$ is  semisimple (see Proposition \ref{s-vn-vn}). A non-trivial example of a $u$-$S$-semisimple ring which is not semisimple is given in Example \ref{us-sim-not-sem}. Finally, we give a new characterization of semisimple rings (see  Proposition \ref{s-semi-loc-char}).

\section{$u$-$S$-split $u$-$S$-exact sequences and $u$-$S$-projective modules}

Let $R$ be a ring and $S$ a multiplicative subset of $R$. Recall from  \cite[Definition 1.6.10]{fk16} that  an $R$-module $T$ is called a $u$-$S$-torsion module  provided that there exists an element $s\in S$ such that $sT=0$. Suppose $M$, $N$ and $L$ are $R$-modules.
\begin{enumerate}
\item An  $R$-homomorphism $f:M\rightarrow N$ is called a $u$-$S$-monomorphism $($resp.,    $u$-$S$-epimorphism$)$ provided that $\Ker(f)$ $($resp., $\Coker(f))$ is a  $u$-$S$-torsion module.

 \item    An  $R$-homomorphism $f:M\rightarrow N$ is  called a $u$-$S$-isomorphism  provided that $f$ is both a $u$-$S$-monomorphism and a $u$-$S$-epimorphism.

\item An $R$-sequence  $M\xrightarrow{f} N\xrightarrow{g} L$ is called  $u$-$S$-exact provided that there is an element $s\in S$ such that $s\Ker(g)\subseteq \Im(f)$ and $s\Im(f)\subseteq \Ker(g)$.

\item    A $u$-$S$-exact sequence $0\rightarrow A\rightarrow B\rightarrow C\rightarrow 0$ is called a short $u$-$S$-exact sequence.
\end{enumerate}
It is easy to verify that  $f:M\rightarrow N$ is a $u$-$S$-monomorphism  $($resp.,   $u$-$S$-epimorphism$)$ if and only if  $0\rightarrow M\xrightarrow{f} N$   $($resp., $M\xrightarrow{f} N\rightarrow 0$ $)$ is $u$-$S$-exact.

Suppose $M$ and $N$ are $R$-modules. We say $M$ is $u$-$S$-isomorphic to $N$ if there exists a $u$-$S$-isomorphism $f:M\rightarrow N$. A family $\C$  of $R$-modules  is said to be closed under $u$-$S$-isomorphisms if $M$ is $u$-$S$-isomorphic to $N$ and $M$ is in $\C$, then $N$ is  also in  $\C$.  One can deduce from the following Lemma \ref{l} that the $u$-$S$-isomorphism is actually an equivalence relation.

\begin{lemma}\label{l}
Let $R$ be a ring and  $S$ a multiplicative subset of $R$. Let $M$ and $N$ be $R$-modules. Suppose there is a $u$-$S$-isomorphism $f:M\rightarrow N$. Then there is a $u$-$S$-isomorphism $g:N\rightarrow M$ and $t\in S$ such that $f\circ g=t\Id_N$ and $g\circ f=t\Id_M$.
\end{lemma}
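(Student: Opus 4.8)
The plan is to build the inverse $g$ by hand out of the torsion data of $f$, and to let a single element $t\in S$ simultaneously absorb the kernel and cokernel obstructions. First I would unpack what it means for $f$ to be a $u$-$S$-isomorphism: since $\Ker(f)$ is $u$-$S$-torsion there is $s_1\in S$ with $s_1\Ker(f)=0$, and since $\Coker(f)=N/\Im(f)$ is $u$-$S$-torsion there is $s_2\in S$ with $s_2N\subseteq\Im(f)$. Consequently, for every $n\in N$ one can choose $m_n\in M$ with $f(m_n)=s_2n$, and I would put $t:=s_1s_2\in S$.

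The candidate map is $g\colon N\to M$, $g(n):=s_1m_n$. The crucial, and essentially only delicate, step is well-definedness: the preimage $m_n$ is determined by $n$ only up to $\Ker(f)$, but since $s_1$ annihilates $\Ker(f)$, the product $s_1m_n$ does not depend on the chosen preimage. The same remark, applied to $m_n+m_{n'}$ and to $rm_n$ (which are admissible choices for $m_{n+n'}$ and $m_{rn}$ by $R$-linearity of $f$), shows immediately that $g$ is additive and $R$-linear.

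With $g$ in hand the two identities are short computations: $f(g(n))=s_1f(m_n)=s_1s_2n=tn$, whence $f\circ g=t\,\Id_N$; and since $f(s_2m)=s_2f(m)$ exhibits $s_2m$ as an admissible choice of preimage for $f(m)$, we get $g(f(m))=s_1s_2m=tm$, whence $g\circ f=t\,\Id_M$.

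Finally I would read off that $g$ is itself a $u$-$S$-isomorphism directly from these two identities, with no further choices. If $g(n)=0$ then $tn=f(g(n))=0$, so $t\,\Ker(g)=0$; and for every $m\in M$ we have $tm=g(f(m))\in\Im(g)$, so $t\,\Coker(g)=0$. Hence $\Ker(g)$ and $\Coker(g)$ are both $u$-$S$-torsion. The main obstacle is precisely the well-definedness of $g$; everything after that is bookkeeping. The key idea worth isolating is that multiplying a not-quite-canonical preimage by the kernel-annihilator $s_1$ converts it into a genuine $R$-homomorphism, and that the single element $t=s_1s_2$ then controls both sides at once.
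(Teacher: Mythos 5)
Your proof is correct and is essentially the paper's own argument: the paper builds the same map $g$ by factoring it as $g_2\circ g_1$ with $g_1(n)=sn\in\Im(f)$ and $g_2(f(m))=sm$ (using a single $s$ that both kills $\Ker(f)$ and satisfies $sN\subseteq\Im(f)$), which is exactly your ``multiply into the image, choose a preimage, multiply by the kernel-annihilator'' construction with $s_1=s_2=s$ and $t=s^2$, and the well-definedness issue you isolate is handled there by the same observation that $s$ annihilates the ambiguity in the preimage. The only differences are cosmetic: you keep $s_1$ and $s_2$ separate and verify well-definedness and linearity directly rather than via the composite of two manifestly well-defined maps.
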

\begin{proof} Let $f:M\rightarrow N$ be a $u$-$S$-isomorphism. Then there is $s\in S$ such that $sN\subseteq \Im(f)$ and $s\Ker(f)=0$. For $y\in N$ pick $x \in M$ with $f(x)=sy$ and define $g(y)=sx$. Suppose $y\in N$ and pick $x_1,x_2\in M$ such that $f(x_1)=sy=f(x_2)$. Then $x_1-x_2\in\Ker(f)$. So $sx_1=s_2x_2$. Thus $g$ is well-defined. One can check that  $g$ is also linear. Trivially, $g$ is a $u$-$S$-isomorphism with  $f\circ g=s^2\Id_N$ and $g\circ f=s^2\Id_M$.
\end{proof}

\begin{remark}
Let $R$ be a ring,  $S$ a multiplicative subset of $R$ and $M$ and $N$ $R$-modules. Then the condition ``there is an $R$-homomorphism $f:M\rightarrow N$  such that $f_S:M_S\rightarrow N_S$ is an isomorphism''  does not mean ``there is an $R$-homomorphism $g:N\rightarrow M$  such that $g_S:N_S\rightarrow M_S$ is an isomorphism''.

Indeed, let $R=\mathbb{Z}$ be the ring of integers, $S=R-\{0\}$  and $\mathbb{Q}$ the quotient field of integers. Then the embedding map $f:\mathbb{Z}\hookrightarrow \mathbb{Q}$ satisfies $f_S:\mathbb{Q}\rightarrow \mathbb{Q}$ is an isomorphism. However, since $\Hom_\mathbb{Z}(\mathbb{Q},\mathbb{Z})=0$, there does not exist any $R$-homomorphism $g:\mathbb{Q}\rightarrow \mathbb{Z}$  such that $g_S:\mathbb{Q}\rightarrow \mathbb{Q}$ is an isomorphism.
\end{remark}

Recall that an  exact sequence $0\rightarrow A\xrightarrow{f} B\xrightarrow{g} C\rightarrow 0$ is split provided that there is an $R$-homomorphism $f': B\rightarrow A$  such that $f'\circ f=\Id_A$.

\begin{definition}\label{s-split-s-exact-sequence}
Let $\xi: 0\rightarrow A\xrightarrow{f} B\xrightarrow{g} C\rightarrow 0$ be a $u$-$S$-short exact sequence. Then $\xi$ is said to be $u$-$S$-split (with respect to $s$) provided that there is  $s\in S$ and $R$-homomorphism $f':B\rightarrow A$ such that $f'(f(a))=sa$ for any $a\in A$, that is, $f'\circ f=s\Id_A$.
\end{definition}

Obviously, any split exact sequence is $u$-$S$-split. Certainly, an  exact sequence $0\rightarrow A\xrightarrow{f} B\xrightarrow{g} C\rightarrow 0$ splits if and only if there exists an $R$-homomorphism $g':C\rightarrow B$ such that $g\circ g'=\Id_C$. Now, we give the uniformly $S$-version of this result.

\begin{lemma}\label{s-split} Let $R$ be a ring and $S$ a multiplicative subset of $R$. A $u$-$S$-short exact sequence $\xi: 0\rightarrow A\xrightarrow{f} B\xrightarrow{g} C\rightarrow 0$ is $u$-$S$-split if and only if  there is  $s\in S$ and $R$-homomorphism $g':C\rightarrow B$ such that $g(g'(c))=sc$ for any $c\in C$, that is, $g\circ g'=s\Id_C$ for some $s\in S$.
\end{lemma}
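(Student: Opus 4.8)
The plan is to prove the two implications separately, in each case manufacturing the desired one-sided splitting from the other by the same ``multiply by $s_0$'' device already used in Lemma~\ref{l}. Before starting either direction I would fix, using that $S$ is multiplicatively closed together with the definition of a short $u$-$S$-exact sequence, a single element $s_0\in S$ witnessing all the exactness data at once: $s_0\Ker(f)=0$ (so $f$ is a $u$-$S$-monomorphism), $s_0\Im(f)\subseteq\Ker(g)$ and $s_0\Ker(g)\subseteq\Im(f)$ (middle $u$-$S$-exactness), and $s_0C\subseteq\Im(g)$ (so $g$ is a $u$-$S$-epimorphism). Everything below is then a finite chain of maps, so the output element of $S$ will be a product of $s_0$ with the given splitting scalar.

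For the forward direction, assume $\xi$ is $u$-$S$-split, say $f'\circ f=s_1\Id_A$. I would set $k=s_1\Id_B-f\circ f'\colon B\to B$ and first observe $k\circ f=0$, hence $\Im(f)\subseteq\Ker(k)$; combined with $s_0\Ker(g)\subseteq\Im(f)$ this gives $s_0k(\Ker(g))=0$, so the map $s_0k$ kills $\Ker(g)$ and therefore factors as $s_0k=\tilde k\circ\pi$ through the projection $\pi\colon B\twoheadrightarrow B/\Ker(g)$. Since the induced map $\bar g\colon B/\Ker(g)\to C$ is injective with image $\Im(g)\supseteq s_0C$, the assignment $c\mapsto\bar g^{-1}(s_0c)$ is a well-defined homomorphism $\rho\colon C\to B/\Ker(g)$, and I would put $g'=\tilde k\circ\rho$. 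A direct computation, using $s_0\,g\circ f=0$ to annihilate the $f\circ f'$ term, then yields $g\circ g'=s_0^2s_1\Id_C$, as required.

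The backward direction is dual: assuming $g\circ g'=s_2\Id_C$, I would set $h=s_2\Id_B-g'\circ g\colon B\to B$ and check $g\circ h=0$, so $\Im(h)\subseteq\Ker(g)$ and hence $s_0h(b)\in\Im(f)$ for every $b\in B$. The hard part here is that $f$ need not be injective, so ``dividing by $f$'' is only well defined modulo $\Ker(f)$; this is exactly where $s_0\Ker(f)=0$ is used. Concretely, $b\mapsto \bar f^{-1}(s_0h(b))$ defines a homomorphism $\psi\colon B\to A/\Ker(f)$ via the isomorphism $\bar f\colon A/\Ker(f)\xrightarrow{\sim}\Im(f)$, and since $s_0\Ker(f)=0$ the rule $\bar a\mapsto s_0a$ gives a well-defined homomorphism $\mu\colon A/\Ker(f)\to A$. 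Setting $f'=\mu\circ\psi$ and computing $f'\circ f$, using $s_0\,g\circ f=0$ to kill the $g'\circ g$ term, produces $f'\circ f=s_0^2s_2\Id_A$, so $\xi$ is $u$-$S$-split.

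I would flag the well-definedness of $\rho$ (resp.\ of $\mu$ and $\psi$) as the only genuinely delicate point: the factor $s_0$ must be inserted in exactly the right place so that the non-injectivity of $f$ and the non-surjectivity of $g$ are absorbed, after which the scalar identities drop out of routine bookkeeping. The symmetry between the two directions means the two arguments are mirror images, differing only in whether one pushes the correcting $s_0$ through $\Ker(g)$ or through $\Ker(f)$.
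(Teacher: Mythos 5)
Your proof is correct, and it takes a genuinely different route from the paper's. The paper reduces to the classical case: it replaces $\xi$ by the honest short exact sequence $0\rightarrow \Ker(g)\rightarrow B\rightarrow B/\Ker(g)\rightarrow 0$, transports the retraction $f'$ along the $u$-$S$-isomorphisms $A\rightarrow\Ker(g)$ and $B/\Ker(g)\rightarrow C$ using Lemma \ref{l}, invokes the classical fact (\cite[Exercise 1.60]{fk16}) that for a genuine short exact sequence a map $i'$ with $i'\circ i=s\Id$ yields $\pi'$ with $\pi\circ\pi'=s\Id$, and transfers back, accumulating $s=s_1s_2s_3$; sufficiency is left to the reader. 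You instead run the splitting-lemma computation directly on the $u$-$S$-data: from $f'\circ f=s_1\Id_A$ you form $k=s_1\Id_B-f\circ f'$, note that $s_0k$ kills $\Ker(g)$ (via $s_0\Ker(g)\subseteq\Im(f)\subseteq\Ker(k)$) and hence factors through $B/\Ker(g)$, then compose with the well-defined map $c\mapsto\bar g^{-1}(s_0c)$; dually for the converse with $h=s_2\Id_B-g'\circ g$, where $s_0\Ker(f)=0$ makes the ``division by $f$'' legitimate. I verified the identities $g\circ g'=s_0^2s_1\Id_C$ and $f'\circ f=s_0^2s_2\Id_A$, and you correctly handle the key subtlety that in the $u$-$S$-setting only $s_0(g\circ f)=0$ holds rather than $g\circ f=0$. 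Your approach buys self-containedness and symmetry: it needs neither Lemma \ref{l} nor the cited exercise, proves both implications explicitly, and avoids the chain of transported isomorphisms where the paper's written computation contains slips (its displayed identities $t\circ t'=s_2\Id_{A'}$ and $t\circ t'=s_2\Id_A$ should involve $t'\circ t$, and $g'=l'_1\circ\pi'$ should read $\pi'\circ l'$). What the paper's route buys in exchange is reuse of general machinery — the pattern ``reduce a $u$-$S$-statement to a classical one via $u$-$S$-isomorphisms'' recurs throughout the article — whereas your argument, while cleaner here, is tailored to this particular lemma.
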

\begin{proof} Choose the $R$-homomorphism $f': B\rightarrow  A$ and $s'\in S$ such that  $f'\circ  f=s'\Id_A, s'C\subseteq  \Im(g)$, $s'\Ker(g)\subseteq \Im(f)$ and  $s'\Im(f)\subseteq \Ker(g)$.
Define the map $g': C\rightarrow  B$ as follows.
For $z\in C$, pick $y\in B$ with $g(y) = s'z$ and define  $g'(z) = s'^2y -s'f(f'(y))$.
When  $g(y) = g(y') = s'z$, we pick $x\in A$ with $f(x) = s'(y-y')$, so
   $(s'^2y - s'f(f'(y))) - (s'^2y' - s'f(f'(y'))) = s'^2(y-y') - s'f(f'(y-y'))
   = s'f(x) - f(f'(f(x))) = s'f(x) - s'f(x) = 0$, thus $g'$ is well-defined.
It also can be checked that $g'$ is linear. Finally, if  $g(y) = s'z$,  we have
   $g(g'(z)) = g(s'^2y - s'f(f'(y))) = s'^3z$ because $s'g\circ f=0$.  Setting $s=s'^3$, we have $g\circ g' = s\Id_C$. The sufficiency can be proved similarly.

\end{proof}

Recall from \cite[Definition 4.1]{QKWCZ21} that an $R$-module $E$ is called $u$-$S$-injective provided that the induced sequence
 $$0\rightarrow \Hom_R(C,E)\rightarrow \Hom_R(B,E)\rightarrow \Hom_R(A,E)\rightarrow 0$$
 is $u$-$S$-exact for any $u$-$S$-exact sequence $0\rightarrow A\rightarrow B\rightarrow C\rightarrow 0$.
 By \cite[Theorem 4.3]{QKWCZ21}, an $R$-module $E$  is $u$-$S$-injective, if and only if for any short exact sequence $0\rightarrow A\xrightarrow{f} B\xrightarrow{g} C\rightarrow 0$, the induced sequence $0\rightarrow \Hom_R(C,E)\xrightarrow{g^\ast} \Hom_R(B,E)\xrightarrow{f^\ast} \Hom_R(A,E)\rightarrow 0$ is  $u$-$S$-exact, if and only if $\Ext_R^1(M,E)$ is  $u$-$S$-torsion for any $R$-module $M$, if and only if $\Ext_R^n(M,E)$ is  $u$-$S$-torsion for any $R$-module $M$ and any $n\geq 1$.
We can characterize  $u$-$S$-injective modules using $u$-$S$-exact sequences.
\begin{proposition}\label{s-inj-exac} Let $R$ be a ring,  $S$ a multiplicative subset of $R$  and $E$ an $R$-module. Then the following statements are equivalent:
\begin{enumerate}
\item $E$ is $u$-$S$-injective;
\item for any $u$-$S$-monomorphism $A\xrightarrow{f} B$  there exists $s\in S$ such that for any $R$-homomorphism $h: A\rightarrow E$, there exists an $R$-homomorphism $g: B\rightarrow E$ satisfying $sh=g\circ f$;
\end{enumerate}
\end{proposition}
\begin{proof}  $(1)\Rightarrow (2)$: Set $C = \Coker(f)$. As $0\rightarrow A \xrightarrow{f} B\rightarrow C \rightarrow 0$ is $u$-$S$-exact, we get the $u$-$S$-epimorphism  $f^*: \Hom(B,E)\rightarrow \Hom(A,E)$.
Pick $s\in S$ with $s\Hom(A,E)\subseteq \Im(f^*)$. Then $sh=g\circ f$ for some linear map $g:B \rightarrow E.$

$(2)\Rightarrow (1)$: Let $M$ be an $R$-module and $0\rightarrow N\xrightarrow{i} P\rightarrow M\rightarrow 0$  a short exact sequence of $R$-modules with $P$ projective. Then we have a long exact sequence $0\rightarrow \Hom_R(M,E)\rightarrow \Hom_R(P,E)\xrightarrow{i^\ast} \Hom_R(N,E)\rightarrow \Ext_R^1(M,E)\rightarrow 0$. By $(2)$, $i^\ast$ is a $u$-$S$-epimorphism. Thus  $\Ext_R^1(M,E)$ is $u$-$S$-torsion. So $E$ is $u$-$S$-injective.
\end{proof}

\begin{lemma} The following two statements are equivalent:
\begin{enumerate}
\item   any $u$-$S$-short exact sequence  $0\rightarrow E\rightarrow B\rightarrow C\rightarrow 0$ beginning at $E$ is $u$-$S$-split ;
\item  any short exact sequence  $0\rightarrow E\rightarrow B\rightarrow C\rightarrow 0$ beginning at $E$ is $u$-$S$-split.
\end{enumerate}
\end{lemma}
\begin{proof}

$(1)\Rightarrow (2)$: Obvious.

$(2)\Rightarrow (1)$: Let  $0\rightarrow E\xrightarrow{f}  B\xrightarrow{g} C\rightarrow 0$ be a  $u$-$S$-short exact sequence.  Then  there is a short exact sequence $0\rightarrow \Im(f)\rightarrow B\rightarrow \Coker(f)\rightarrow 0$. If we denote by $f':E\twoheadrightarrow\Im(f)$ to be the natural epimorphism, then there is an $u$-$S$-isomorphism $h:\Im(f)\rightarrow E$ such that $h\circ f'=s_1\Id_E$ for some $s_1\in S$ by Lemma \ref{l}. Consider the following push-out:
  $$\xymatrix@R=20pt@C=25pt{
0\ar[r]^{}& E \ar[r]^{v}&X\ar[r]^{}&\Coker(f)\ar[r] &0\\
0\ar[r]^{}& \Im(f) \ar[u]^{h}\ar[r]_{i}&B \ar[u]^{x}\ar[r]_{}&\Coker(f)\ar[u]^{\cong} \ar[r] &0,\\}$$
 By (2), there exists $s_2\in S$ and an $R$-homomorphism $w:X\rightarrow E$ such that $w\circ v=s_2\Id_E$. So $w\circ x\circ f=w\circ x\circ i\circ f=w\circ v\circ h\circ f=s_1s_2\Id_E$. Hence  $0\rightarrow E\rightarrow B\rightarrow C\rightarrow 0$ is $u$-$S$-split with respect to $s_1s_2$.
 \end{proof}

\begin{corollary}\label{s-inj-char-1} Let $R$ be a ring, $S$ a multiplicative subset of $R$ and $E$  an $R$-module. Then the following two statements hold:
\begin{enumerate}
\item  If $E$ is $u$-$S$-injective, then  any $u$-$S$-short exact sequence  $0\rightarrow E\rightarrow B\rightarrow C\rightarrow 0$ beginning at $E$ is $u$-$S$-split;
\item If there exists $s\in S$ such that any short exact sequence  $0\rightarrow E\rightarrow B\rightarrow C\rightarrow 0$ beginning at $E$ is $u$-$S$-split with respect to $s$, then $E$ is $u$-$S$-injective.
\end{enumerate}
\end{corollary}
\begin{proof}

(1) Let $0\rightarrow E\xrightarrow{f} B\xrightarrow{g} C\rightarrow 0$ be a $u$-$S$-short exact sequence of $R$-modules. Set $h=\Id_E:E\rightarrow E$ be the identity map of $E$. Then  there exists $s\in S$ and $g:B\rightarrow E$ such that $s\Id_E=g\circ f$. Hence  $0\rightarrow E\rightarrow B\rightarrow C\rightarrow 0$ is $u$-$S$-split.

$(2)$ Let $f: A\rightarrow B$ be a  $u$-$S$-monomorphism. Then there is $s_1\in S$ such that $s_1\Ker(f)=0$.    Let $g:A\rightarrow E$ be an $R$-homomorphism. Consider the following push-out:
 $$\xymatrix@R=20pt@C=25pt{
E \ar[r]^{h}&X\ar[r]^{}&\Coker(h)\ar[r] &0\\
A \ar[u]^{g}\ar[r]_{f}&B \ar[u]^{l}\ar[r]_{\pi}&\Coker(f)\ar[u]^{\cong} \ar[r] &0,\\}$$
we have $\Ker(h)$ can be seen as a quotient of $\Ker(f)$. So $s_1\Ker(h)=0$. Hence $0\rightarrow E\rightarrow X\rightarrow \Coker(h)\rightarrow 0$ is $u$-$S$-exact, and thus $u$-$S$-split with respect to $s$. So there is an $R$-homomorphism $h':X\rightarrow E$
such that $h'\circ h=s\Id_E$ . Hence $h' \circ l\circ f=h' \circ h\circ g=sg$. Note that $s$ is independent with $g$. So $E$ is $u$-$S$-injective.
 \end{proof}

Recall that an $R$-module $P$ is said to be projective provided that the induced sequence $0\rightarrow \Hom_R(P,A)\rightarrow \Hom_R(P,B)\rightarrow \Hom_R(P,C)\rightarrow 0$ is exact for any exact sequence $0\rightarrow A\rightarrow B\rightarrow C\rightarrow 0$. Now we give a uniformly $S$-analogue of  projective modules.

\begin{definition}\label{s-projective} Let $R$ be a ring and $S$ a multiplicative subset of $R$. An $R$-module $P$ is called $u$-$S$-projective provided that the induced sequence $$0\rightarrow \Hom_R(P,A)\rightarrow \Hom_R(P,B)\rightarrow \Hom_R(P,C)\rightarrow 0$$ is $u$-$S$-exact for any $u$-$S$-exact sequence $0\rightarrow A\rightarrow B\rightarrow C\rightarrow 0$.
\end{definition}


In common with the classical cases, we have the following characterizations of $u$-$S$-projective modules. Since the proof is very similar with that of characterizations of $u$-$S$-injective modules (see Proposition \ref{s-inj-exac} and \cite[Theorem 4.3]{QKWCZ21}), we omit the proof.

\begin{theorem}\label{s-proj-char}
Let $R$ be a ring, $S$ a multiplicative subset of $R$ and $P$ an $R$-module. Then the following statements are equivalent:
\begin{enumerate}
\item  $P$ is  $u$-$S$-projective;

\item for any $u$-$S$-epimorphism $B\xrightarrow{g} C$ there exists $s\in S$ such that for any $R$-homomorphism $h: P\rightarrow C$, there exists an $R$-homomorphism $\alpha: P\rightarrow B$ satisfying $sh=g\circ \alpha$;

\item for any short exact sequence $0\rightarrow A\xrightarrow{f} B\xrightarrow{g} C\rightarrow 0$, the induced sequence $0\rightarrow \Hom_R(P,A)\xrightarrow{f_\ast} \Hom_R(P,B)\xrightarrow{g_\ast} \Hom_R(P,C)\rightarrow 0$ is  $u$-$S$-exact;

\item  $\Ext_R^1(P,M)$ is  $u$-$S$-torsion for any  $R$-module $M$;

\item  $\Ext_R^n(P,M)$ is  $u$-$S$-torsion for any  $R$-module $M$ and $n\geq 1$.
\end{enumerate}
\end{theorem}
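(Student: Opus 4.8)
The plan is to establish the cycle of equivalences following the same strategy used for $u$-$S$-injective modules in Proposition \ref{s-inj-exac}, exploiting the duality between $\Hom_R(P,-)$ and $\Hom_R(-,E)$. I would prove the equivalences in the order $(1)\Rightarrow (2)\Rightarrow (1)$, then $(1)\Rightarrow (3)\Rightarrow (4)\Rightarrow (1)$, then $(1)\Leftrightarrow (5)\Leftrightarrow (6)\Leftrightarrow (7)$, so that each implication is short. For $(1)\Rightarrow (2)$, given a $u$-$S$-epimorphism $g\colon B\to C$, I would apply $\Hom_R(P,-)$ to the genuine short exact sequence $0\to \Ker(g)\to B\xrightarrow{\pi}\Im(g)\to 0$ and use that $\Im(g)$ is $u$-$S$-isomorphic to $C$; Lemma \ref{l} then supplies the element $s\in S$ needed to lift $h\colon P\to C$ back through $g$ up to multiplication by $s$. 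For $(2)\Rightarrow (1)$, I would take any $R$-module $M$ with a short exact sequence $0\to N\to F\to M\to 0$ where $F$ is free (hence projective), apply $\Hom_R(P,-)$ to obtain the long exact sequence ending in $\Ext_R^1(P,M)$ wait—in the covariant setting the relevant term is the cokernel governed by condition $(2)$—and conclude the connecting map is a $u$-$S$-epimorphism.

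Next, for $(1)\Rightarrow (3)$ I would apply the definition to a $u$-$S$-short exact sequence $0\to A\xrightarrow{f} B\xrightarrow{g} P\to 0$, obtaining a $u$-$S$-exact sequence whose final map $\Hom_R(P,B)\to \Hom_R(P,P)$ is a $u$-$S$-epimorphism; pulling back $\Id_P$ yields $g'\colon P\to B$ and $s\in S$ with $g\circ g'=s\Id_P$, and Lemma \ref{s-split} then gives the $u$-$S$-splitting. The implication $(3)\Rightarrow(4)$ is immediate since every short exact sequence is in particular $u$-$S$-short exact. For $(4)\Rightarrow(1)$, I would mimic the dual argument from Proposition \ref{s-inj-exac}: starting from an arbitrary short exact sequence, use the splitting datum $g\circ g'=s\Id_P$ to produce a preimage of any map in $\Hom_R(P,C)$, thereby witnessing $u$-$S$-exactness of the induced $\Hom$ sequence.

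For the remaining equivalences, $(5)$ is just the restriction of $(1)$ to honest short exact sequences, and the equivalence $(1)\Leftrightarrow(5)$ follows because any $u$-$S$-exact sequence is $u$-$S$-isomorphic, termwise, to a genuine one (replacing $\Im$ and $\Ker$ appropriately and invoking Lemma \ref{l} to transfer $u$-$S$-torsion of the relevant cokernels). The equivalence $(5)\Leftrightarrow(6)$ comes from the long exact $\Ext$ sequence: the cokernel of $\Hom_R(P,B)\to\Hom_R(P,C)$ embeds into $\Ext_R^1(P,A)$, so $u$-$S$-exactness on the right is controlled precisely by $u$-$S$-torsionness of $\Ext^1_R(P,-)$. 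Finally $(6)\Rightarrow(7)$ I would prove by dimension shifting: writing $0\to K\to F\to M\to 0$ with $F$ free gives $\Ext_R^n(P,M)\cong \Ext_R^{n-1}(P,K)$ for $n\geq 2$, and induction together with the fact that a uniform bound $s\in S$ can be chosen reduces each higher $\Ext$ to the case $n=1$; $(7)\Rightarrow(6)$ is trivial.

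The main obstacle I anticipate is the uniformity bookkeeping throughout: unlike the classical proofs, at every step where I invoke Lemma \ref{l} or pass between a $u$-$S$-exact sequence and a genuine one, a new element of $S$ is introduced, and I must ensure these combine into a \emph{single} $s\in S$ (their product) that works uniformly, rather than a sequence of elements depending on the module. This is especially delicate in $(6)\Rightarrow(7)$, where the dimension-shifting isomorphism must be checked to preserve the uniform bound at each inductive stage so that the bound does not degrade with $n$; keeping the chosen $s$ independent of $M$ is the crux that distinguishes the uniform theory from a pointwise localization argument.
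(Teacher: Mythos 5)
The paper offers no written proof of Theorem \ref{s-proj-char} --- it is declared dual to Proposition \ref{s-inj-exac} and \cite[Theorem 4.3]{QKWCZ21} --- and your global plan is exactly that dualization; your $(1)\Rightarrow(2)$, $(1)\Rightarrow(3)\Rightarrow(4)$, and the reduction of $u$-$S$-exact sequences to genuine ones are fine. But there is a genuine, and recurring, gap: in both $(2)\Rightarrow(1)$ and $(6)\Rightarrow(7)$ you apply the covariant functor $\Hom_R(P,-)$ to a free presentation $0\to K\to F\to M\to 0$ of the test module $M$. This yields nothing: freeness of $F$ kills $\Ext$ in the \emph{first} slot, not the second, so the terms $\Ext^n_R(P,F)$ in the long exact sequence do not vanish; in particular the claimed shift $\Ext^n_R(P,M)\cong\Ext^{n-1}_R(P,K)$ is false (that isomorphism comes from a presentation of $P$, not of $M$). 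The correct dual of the projective presentation used in Proposition \ref{s-inj-exac} is an injective copresentation $0\to M\to E\to L\to 0$: since $\Ext^n_R(P,E)=0$, one gets $\Ext^1_R(P,M)\cong\Coker\bigl(\Hom_R(P,E)\to\Hom_R(P,L)\bigr)$ and $\Ext^n_R(P,M)\cong\Ext^{n-1}_R(P,L)$ for $n\ge 2$; this is also what makes your $(5)\Leftrightarrow(6)$ ``precise'' in the direction $(5)\Rightarrow(6)$. Your interjected ``wait'' shows you sensed the problem, but the fix never appears. Note moreover that even with the copresentation, condition $(2)$ hands you an $s$ depending on each individual $h\colon P\to L$, whereas $u$-$S$-torsionness of the cokernel demands one $s$ for all $h$ at once. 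The clean repair is to route through $(3)$: applying $(2)$ with the single map $h=\Id_P$ to a sequence ending at $P$ gives the $u$-$S$-splitting outright via Lemma \ref{s-split}, and then the splitting $g\circ g'=s\Id_P$ of one fixed free presentation $0\to K\to F\xrightarrow{g} P\to 0$ shows that multiplication by $s$ on $\Ext^n_R(P,M)$ factors through $\Ext^n_R(F,M)=0$, which proves $(6)$ and $(7)$ in one stroke.

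This also shows that your closing worry is inverted. By the paper's definition, $u$-$S$-torsion is a per-module condition, so in $(6)$ and $(7)$ the element $s$ is allowed to depend on $M$ and on $n$; a correct dimension shift therefore needs no uniformity bookkeeping at all, and in any case the factorization argument above produces one $s$, depending only on $P$, that works for every $M$ and every $n\ge 1$ simultaneously. The place where uniformity genuinely bites is inside a single Hom sequence: the same $s$ must clear the whole cokernel of $g_\ast$ at once, which is exactly why lifting each $h$ separately through condition $(2)$ is insufficient, and why testing against the single map $\Id_P$ (or, if you insist on arguing directly, against the diagonal map $P\to L^{\Hom_R(P,L)}$ lifted along the epimorphism $E^{\Hom_R(P,L)}\to L^{\Hom_R(P,L)}$) is the device that delivers it.
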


Similar to the proof of Corollary \ref{s-inj-char-1}, we have the following result.
\begin{corollary}\label{s-proj-char-1}
Let $R$ be a ring, $S$ a multiplicative subset of $R$ and $P$ an $R$-module. Then the following statements hold:
\begin{enumerate}
\item If  $P$ is  $u$-$S$-projective, then any $u$-$S$-short exact sequence $0\rightarrow A\xrightarrow{f} B\xrightarrow{g} P\rightarrow 0$ is $u$-$S$-split;

\item If there is $s\in S$ such that any short exact sequence $0\rightarrow A\xrightarrow{f} B\xrightarrow{g} P\rightarrow 0$ is $u$-$S$-split with respect to $s$, then $P$ is  $u$-$S$-projective.
\end{enumerate}
\end{corollary}

By Theorem \ref{s-proj-char}, projective modules are $u$-$S$-projective. Moreover, $u$-$S$-torsion modules are $u$-$S$-projective by \cite[Lemma 4.2]{QKWCZ21}.

\begin{corollary}\label{inj-ust-s-inj}
Let $R$ be a ring and $S$ a multiplicative subset of $R$. Let $P$ be a $u$-$S$-torsion $R$-module or a projective $R$-module. Then $P$ is $u$-$S$-projective.
\end{corollary}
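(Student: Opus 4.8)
The plan is to reduce both cases to the Ext-characterization of $u$-$S$-projectivity, namely condition $(6)$ of Theorem \ref{s-proj-char}: an $R$-module $P$ is $u$-$S$-projective precisely when $\Ext_R^1(P,M)$ is $u$-$S$-torsion for every $R$-module $M$. I would handle the projective case and the $u$-$S$-torsion case separately, since the mechanisms differ.

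For a projective $P$ the argument is immediate. Projectivity gives $\Ext_R^1(P,M)=0$ for every $R$-module $M$, and the zero module is trivially $u$-$S$-torsion (annihilated by $1\in S$). Hence condition $(6)$ holds and $P$ is $u$-$S$-projective. (Alternatively one could verify condition $(1)$ directly, but routing through Ext is cleaner.)

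For a $u$-$S$-torsion module $P$, fix $s\in S$ with $sP=0$. The key observation is that the endomorphism $s\,\Id_P$ is the zero map on $P$. Since $R$ is commutative and $\Ext_R^1(-,M)$ is an $R$-linear contravariant functor, the endomorphism of $\Ext_R^1(P,M)$ induced by $s\,\Id_P$ coincides with multiplication by $s$ on $\Ext_R^1(P,M)$. As $s\,\Id_P=0$, this induced map is zero, whence $s\,\Ext_R^1(P,M)=0$; that is, $\Ext_R^1(P,M)$ is $u$-$S$-torsion for every $M$, with the \emph{same} $s$ working uniformly. Again condition $(6)$ applies, giving $u$-$S$-projectivity. (The identical reasoning shows $\Ext_R^n(P,M)$ is killed by $s$ for all $n\geq 1$, matching condition $(7)$.)

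The only point requiring a little care, in the torsion case, is the identification of the map induced by $s\,\Id_P$ with scalar multiplication by $s$ on the Ext group; this is precisely the $R$-linearity of $\Ext_R^1(-,M)$ over the commutative ring $R$ and presents no real obstacle. I expect no genuine difficulty: the statement simply assembles the two facts already recorded after Theorem \ref{s-proj-char}, and the proof is correspondingly short.
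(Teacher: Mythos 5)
Your proof is correct and takes essentially the same route as the paper: both cases are reduced to the Ext-characterization, condition $(6)$ of Theorem \ref{s-proj-char}, with $\Ext_R^1(P,M)=0$ handling the projective case. The only difference is that where the paper simply cites \cite[Lemma 4.2]{QKWCZ21} for the $u$-$S$-torsion case, you prove that fact inline via the $R$-linearity of $\Ext_R^1(-,M)$ (the map induced by $s\,\Id_P=0$ is multiplication by $s$, so $s\,\Ext_R^1(P,M)=0$), a valid argument that moreover yields a single $s$ working uniformly for all $M$.
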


\begin{proposition}\label{dird-sp} Let $R = R_1 \times R_2$ be direct product of rings $R_1$ and $R_2$, $S = S_1\times S_2 := \{(s_1, s_2) | s_1\in S_1, s_2\in S_2\}$ a direct product of multiplicative subsets of $R_1$ and $R_2$. Set $e_1 = (1, 0)$ and $e_2 = (0, 1)$. Then $P$ is a $u$-$S$-projective $R$-module if and only if $Pe_i$ is a $u$-$S_i$-projective $R_i$-module for each $i = 1, 2$.
\end{proposition}
\begin{proof}
Suppose $P$ is $u$-$S$-projective. Then $P\cong P e_1\times P e_2$. Let $N$ be an $R_1$-
module. Then, as $R_1$-modules, we have $\Ext^1_R(M, N \times 0)\cong \Ext^1_{R_1} (P e_1, N)$ which
is $u$-$S_1$-torsion. Consequently, $P e_1$ is a $u$-$S_1$-projective $R_1$-module. Similarly, $P e_2$ is
a $u$-$S_2$-projective $R_2$-module. On the other hand, suppose $P e_i$
is a $u$-$S_i$-projective
$R_i$-module for each $i = 1, 2$. Let $N$ be an $R$-module. Then $N\cong Ne_1\times Ne_2$.
So $\Ext^1_R(P, N) \cong \Ext^1_{R_1} (P e_1, Ne_1) \times \Ext^1_{R_2} (P e_2, Ne_2)$ which is $u$-$S$-torsion. Consequently, $P$ is a $u$-$S$-projective $R$-module.
\end{proof}

Recall from \cite{z21} that an $R$-module $F$ is  $u$-$S$-flat if and only if $\Tor^R_1(M,F)$ is  $u$-$S$-torsion for any  $R$-module $M$.

\begin{proposition}\label{S-proj-flat} Let  $R$ be a ring and $S$ a multiplicative subset of $R$. If $P$ is a $u$-$S$-projective $R$-module, then $P$ is $u$-$S$-flat.
\end{proposition}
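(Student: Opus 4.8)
The plan is to pass through character modules and reduce the $u$-$S$-flatness of $P$ to the $u$-$S$-projectivity hypothesis via a standard Ext--Tor duality. Recall that $P$ is $u$-$S$-flat if and only if $\Tor^R_1(M,P)$ is $u$-$S$-torsion for every $R$-module $M$, while by Theorem \ref{s-proj-char} the $u$-$S$-projectivity of $P$ means precisely that $\Ext_R^1(P,N)$ is $u$-$S$-torsion for every $R$-module $N$. For an $R$-module $M$ write $M^+=\Hom_{\mathbb{Z}}(M,\mathbb{Q}/\mathbb{Z})$ for its character module.

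First I would record the natural isomorphism $\Ext_R^1(P,M^+)\cong \Tor_1^R(P,M)^+$. This comes from the Hom-tensor adjunction $\Hom_R(-,\Hom_{\mathbb{Z}}(M,\mathbb{Q}/\mathbb{Z}))\cong \Hom_{\mathbb{Z}}(-\otimes_R M,\mathbb{Q}/\mathbb{Z})$: applying a projective resolution $P_\bullet\rightarrow P$ and using that $\Hom_{\mathbb{Z}}(-,\mathbb{Q}/\mathbb{Z})$ is exact (so that it commutes with passage to (co)homology), one obtains $\Ext_R^n(P,M^+)\cong \Tor_n^R(P,M)^+$ for all $n\geq 0$. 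Since $R$ is commutative, $\Tor_1^R(P,M)\cong \Tor_1^R(M,P)$, so in particular $\Ext_R^1(P,M^+)\cong \Tor_1^R(M,P)^+$.

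Now fix an arbitrary $R$-module $M$. Since $P$ is $u$-$S$-projective, $\Ext_R^1(P,M^+)$ is $u$-$S$-torsion, say annihilated by some $s\in S$. By the displayed isomorphism, $s\,\Tor_1^R(M,P)^+=0$. Finally I would descend from the character module back to $\Tor_1^R(M,P)$ itself: if $N$ is any $R$-module with $sN^+=0$, then for every $x\in N$ and every $\varphi\in N^+$ we have $\varphi(sx)=(s\varphi)(x)=0$, so $sx$ is killed by all characters; as $\mathbb{Q}/\mathbb{Z}$ is an injective cogenerator of $\mathbb{Z}$-modules this forces $sx=0$, whence $sN=0$. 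Applying this to $N=\Tor_1^R(M,P)$ shows $\Tor_1^R(M,P)$ is $u$-$S$-torsion. As $M$ was arbitrary, $P$ is $u$-$S$-flat.

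The only genuinely delicate points are the construction of the natural isomorphism $\Ext_R^n(P,M^+)\cong \Tor_n^R(P,M)^+$ --- which rests on the adjunction together with the exactness of the character functor --- and the cogenerator argument used to descend an annihilator from $N^+$ to $N$; both are standard, so no serious obstacle is expected. I would note only that $u$-$S$-torsion is a per-module condition, so the element $s\in S$ is allowed to depend on $M$, and no uniform choice across all $M$ is required here.
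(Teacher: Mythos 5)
Your proof is correct and takes essentially the same approach as the paper: both pass from $u$-$S$-projectivity to $u$-$S$-flatness via the Ext--Tor duality against an injective cogenerator, the paper using an arbitrary injective cogenerator $E$ of $R$-modules and the isomorphism $\Ext_R^1(P,\Hom_R(M,E))\cong\Hom_R(\Tor_1^R(P,M),E)$ (citing G\"obel--Trlifaj), where you specialize to the character module $M^+=\Hom_{\mathbb{Z}}(M,\mathbb{Q}/\mathbb{Z})$. Your final descent step (from $sN^+=0$ to $sN=0$) is the same cogenerator argument the paper carries out with $E$, just phrased more directly, and your remark that $s$ may depend on $M$ matches the paper's usage of the Tor-criterion for $u$-$S$-flatness.
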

\begin{proof} Let $P$ be a $u$-$S$-projective $R$-module, $M$ an $R$-module and $E$  an injective cogenerator. Then there is an element $s\in S$ such that $s\Ext_R^1(P,\Hom_R(M,E))=0$ by Theorem \ref{s-proj-char}. By \cite[Lemma 2.16(b)]{gt}, we have $s\Hom_R(\Tor_1^R(P,M),E))=0$. Let $f: s\Tor_1^R(P,M)\rightarrow E$ be an $R$-homomorphism. Since $E$ is injective,  there is an $R$-homomorphism $g: \Tor_1^R(P,M)\rightarrow E$ such that $f=g\circ i$ where $i: s\Tor_1^R(P,M)\hookrightarrow \Tor_1^R(P,M)$ is the embedding map. Since $s\Hom_R(\Tor_1^R(P,M),E))=0$, we have  $f(sx)=g(sx)=sg(x)=0$ for any $x\in \Tor_1^R(P,M)$. Thus  $\Hom_R(s\Tor_1^R(P,M),E)=0$. So  $s\Tor_1^R(P,M)=0$ since $E$ is an injective cogenerator. Consequently,  $P$ is $u$-$S$-flat.
\end{proof}

\begin{proposition}\label{s-proj-exac}
Let $R$ be a ring and $S$ a multiplicative subset of $R$. Then the following statements hold.
\begin{enumerate}
\item Any finite direct sum of  $u$-$S$-projective modules is  $u$-$S$-projectve.
\item Let $0\rightarrow A\xrightarrow{f} B\xrightarrow{g} C\rightarrow 0$ be a $u$-$S$-exact sequence. If $C$ is  $u$-$S$-projective, then $A$ is  $u$-$S$-projective if and only if so is $B$.
\item   Let  $A\rightarrow B$ be a $u$-$S$-isomorphism. Then $A$  is  $u$-$S$-projective if and only if $B$ is  $u$-$S$-projective.
\item  Let  $0\rightarrow A\xrightarrow{f} B\xrightarrow{g} C\rightarrow 0$  be a $u$-$S$-split $u$-$S$-exact sequence.  If $B$ is  $u$-$S$-projective, then $A$  and $C$ are  $u$-$S$-projective.
\end{enumerate}
\end{proposition}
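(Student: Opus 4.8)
The plan is to prove the five statements of Proposition \ref{s-proj-exac} by systematically exploiting the characterization (6) of Theorem \ref{s-proj-char}, namely that $P$ is $u$-$S$-projective if and only if $\Ext_R^1(P,M)$ is $u$-$S$-torsion for every $R$-module $M$. This reduces each statement about modules to a statement about the $u$-$S$-torsion property of certain $\Ext$-groups, where the long exact sequence in $\Ext$ and the additivity of $\Ext$ in the first variable do most of the work. The key technical subtlety throughout is uniformity: since ``$u$-$S$-projective'' requires a \emph{single} $s\in S$ that annihilates $\Ext^1_R(P,M)$ only after fixing $M$ (the element may depend on $M$), I must be careful that whenever I combine finitely many annihilating elements I take their product, which still lies in $S$.

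For (1), I would fix an $R$-module $M$ and use the natural isomorphism $\Ext^1_R(A\oplus B,M)\cong \Ext^1_R(A,M)\oplus\Ext^1_R(B,M)$; if $s_A,s_B\in S$ annihilate the two summands, then $s_As_B\in S$ annihilates the direct sum, so finiteness of the direct sum is exactly what lets me form a single product in $S$. For (2), (4) and (5), I would start from the given $u$-$S$-exact sequence $0\to A\xrightarrow{f} B\xrightarrow{g} C\to 0$, apply $\Hom_R(-,M)$, and read off the relevant piece of the long exact sequence. Because the sequence is only $u$-$S$-exact rather than exact, I would first invoke Lemma \ref{l} (or \cite[Proposition 1.1]{z21}) to replace it up to $u$-$S$-isomorphism by an honest short exact sequence, absorbing the comparison elements into the fixed element of $S$; part (3), which says $u$-$S$-projectivity is invariant under $u$-$S$-isomorphism, should be proved first and then used freely here. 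Concretely, for (2) the relevant fragment $\Ext^1_R(C,M)\to\Ext^1_R(B,M)\to\Ext^1_R(A,M)$ shows $\Ext^1_R(B,M)$ is squeezed between two $u$-$S$-torsion groups, so it is $u$-$S$-torsion; for (4) the fragment $\Ext^1_R(C,M)\to\Ext^1_R(B,M)\to\Ext^1_R(A,M)\to\Ext^2_R(C,M)$ together with Theorem \ref{s-proj-char}(7) (which gives $u$-$S$-torsion of $\Ext^n$ for all $n\ge 1$) controls $\Ext^1_R(A,M)$.

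For (5), since the sequence is \emph{$u$-$S$-split}, I expect a cleaner argument: a $u$-$S$-split $u$-$S$-exact sequence makes $A$ and $C$ behave like $u$-$S$-direct summands of $B$, so the splitting map lets me pull back extensions, and the conclusion that $A$ and $C$ are $u$-$S$-projective should follow by the same $\Ext$-additivity idea as in (1), now applied in the first variable after splitting. The main obstacle I anticipate is bookkeeping rather than conceptual: in parts (2), (4), (5) the connecting maps and the $u$-$S$-isomorphisms supplied by Lemma \ref{l} each contribute their own element of $S$, and I must verify at each stage that the \emph{finitely many} elements involved can be multiplied into one element of $S$ that works for the fixed $M$. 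Establishing (3) carefully at the outset — that a $u$-$S$-isomorphism $A\to B$ transfers $u$-$S$-torsionness of $\Ext^1_R(-,M)$ in both directions, using the two-sided inverse up to multiplication by $t\in S$ from Lemma \ref{l} — is what will make the remaining parts go through without repeatedly redoing the same comparison; that two-sided control of the $u$-$S$-inverse is the crux on which the uniformity in all five parts ultimately rests.
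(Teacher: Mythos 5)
Your proposal is correct, but it takes a genuinely different route from the paper. The paper proves only part (5) explicitly, and does so with the lifting characterization (condition (2) of Theorem \ref{s-proj-char}): given a $u$-$S$-epimorphism $h:M\rightarrow N$, it chases diagrams using the splitting $f':B\rightarrow A$ with $f'\circ f=s\Id_A$ and the co-splitting $g':C\rightarrow B$ with $g\circ g'=t\Id_C$ to transfer lifts through the $u$-$S$-projective module $B$; parts (1)--(4) are dismissed as dual to \cite[Proposition 4.7]{QKWCZ21}. You instead run everything through the $\Ext$-torsion characterization (conditions (6) and (7) of Theorem \ref{s-proj-char}): additivity of $\Ext^1(-,M)$ for (1); reduction to an honest short exact sequence via Lemma \ref{l} followed by the long exact sequence, with the squeezing argument $s_2x\in\Im\beta$, $s_1s_2x=0$, for (2) and (4) (note (4) genuinely needs $\Ext^2(C,M)$ $u$-$S$-torsion, i.e.\ condition (7)); and for (5) a functorial retract argument, $(f')^\ast$ and $f^\ast$ composing to $s\Id$ on $\Ext^1(A,M)$ and likewise for $C$. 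All of these steps check out, including your (3), where Lemma \ref{l} gives $\psi^\ast\circ\phi^\ast=t\Id$ on both $\Ext$ groups so torsionness transfers with the extra factor $t$; and your worry about uniformity is actually milder than you suggest, since in condition (6) the annihilating element is allowed to depend on $M$ anyway. Two small points of bookkeeping: for the $C$-half of (5) your ``pull back extensions'' needs the co-splitting $g'$, which is not part of Definition \ref{s-split-s-exact-sequence} but is supplied by Lemma \ref{s-split}; and the reduction to honest exactness in (2) and (4) should be done via the explicit $u$-$S$-isomorphisms $sf:A\rightarrow\Ker(g)$ and $\Im(g)\hookrightarrow C$, whose kernels and cokernels are killed by products of the exactness elements --- the paper performs exactly this kind of replacement in Proposition \ref{s-semisimple-s-exact}, so it is a legitimate move in this framework. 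What each approach buys: your $\Ext$ route gives short, uniform arguments for all five parts at once and avoids diagram chases, while the paper's lifting-property proof of (5) is more self-contained (no derived functors) and runs precisely dual to the $u$-$S$-injective arguments of \cite{QKWCZ21}, which is why the authors could delegate (1)--(4) by duality.
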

\begin{proof}
We only prove $(4)$ since the proof of $(1)$-$(3)$ is dual to that of \cite[Proposition 4.7]{QKWCZ21}.

$(4)$:  Let $X$ be a module. We will prove that $\Ext_R^1(C,X)$ is annihilated by some element of $S$.
Let  $h:C \rightarrow B$ such that $g\circ h = s\Id_C$ with $s \in S$.
Then $g,h$ induce the maps $g': \Ext_R^1(C,X) \rightarrow \Ext_R^1(B,X)$ and $h': \Ext_R^1(B,X) \rightarrow \Ext_R^1(C,X)$  with  $h'\circ g'=s\Id_{\Ext_R^1(C,X)}$.
As $t\Ext_R^1(B,X) = 0$ for some $ t \in S$, we get $st\Ext_R^1(C,X) = 0$.
The ``$A$-part'' of the proof goes similarly.

\end{proof}

It is well-known that any direct sum of  projective modules is projective. However, the following example shows that a direct sum of u-S-projective modules is not necessarily $u$-$S$-projective.

\begin{example}
Let $R=\mathbb{Z}$ be the ring of integers, $p$ a prime in $\mathbb{Z}$ and  $S=\{p^n|n\in\mathbb{N}\}$. Let $M_n= \mathbb{Z}/\langle p^n\rangle$ for each $n\geq 1$. Then $M_n$ is $u$-$S$-torsion and thus  $u$-$S$-projective. Set $N=\bigoplus\limits_{n=1}^{\infty}M_n$. Note that $\Ext_{\mathbb{Z}}^1(\mathbb{Z}/\langle p^n\rangle,\mathbb{Z}/\langle p^m\rangle)\cong \mathbb{Z}/\langle p^{\min\{m,n\}}\rangle$. We have  $\Ext_{\mathbb{Z}}^1(N,N)\cong \prod\limits_{n\in\mathbb{N}}(\bigoplus\limits_{m\in\mathbb{N}}\Ext_{\mathbb{Z}}^1(\mathbb{Z}/\langle p^n\rangle,\mathbb{Z}/\langle p^m\rangle)))\cong \prod\limits_{n\in\mathbb{N}}(\bigoplus\limits_{m\in\mathbb{N}} \mathbb{Z}/\langle p^{\min\{m,n\}}\rangle)$. Note that the abelian group $\prod\limits_{n\in\mathbb{N}}(\bigoplus\limits_{m\in\mathbb{N}} \mathbb{Z}/\langle p^{\min\{m,n\}}\rangle)$ contains a subgroup $\prod\limits_{n\in\mathbb{N}}\mathbb{Z}/\langle p^{n}\rangle $. Since $\prod\limits_{n\in\mathbb{N}}\mathbb{Z}/\langle p^{n}\rangle $ is not $u$-$S$-torsion, we have $\Ext_{\mathbb{Z}}^1(N,N)$ is also not $u$-$S$-torsion.  Consequently $N$ is not $u$-$S$-projective.
\end{example}

Let $\p$ be a prime ideal of $R$. We say an $R$-module $P$ is (simply) \emph{$u$-$\p$-projective} provided that  $P$ is  $u$-$(R\setminus\p)$-projective.
\begin{proposition}\label{s-flat-loc-char}
Let $R$ be a ring and $P$ an $R$-module. Then the following statements are equivalent:
 \begin{enumerate}
\item  $P$ is projective;
\item   $P$ is    $u$-$\p$-projective for any $\p\in \Spec(R)$;
\item   $P$ is   $u$-$\m$-projective for any $\m\in \Max(R)$.
 \end{enumerate}
\end{proposition}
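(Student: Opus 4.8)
The plan is to prove the cycle $(1)\Rightarrow(2)\Rightarrow(3)\Rightarrow(1)$, where the first two implications are immediate and essentially all the content lies in the last one.

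For $(1)\Rightarrow(2)$, I would simply invoke Corollary \ref{inj-ust-s-inj}: a projective module is $u$-$S$-projective for \emph{every} multiplicative subset $S$, and in particular for $S=R\setminus\p$ with $\p\in\Spec(R)$; thus $P$ is $u$-$\p$-projective. The implication $(2)\Rightarrow(3)$ is trivial, since every maximal ideal is prime and $\Max(R)\subseteq\Spec(R)$.

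The real work is $(3)\Rightarrow(1)$, and the tool is the Ext-characterization in Theorem \ref{s-proj-char}, namely that $P$ is $u$-$(R\setminus\m)$-projective if and only if $\Ext_R^1(P,M)$ is $u$-$(R\setminus\m)$-torsion for every $R$-module $M$. Fix an arbitrary $R$-module $M$ and set $E=\Ext_R^1(P,M)$; the goal is to show $E=0$, for then $\Ext_R^1(P,-)$ vanishes identically and $P$ is projective. By hypothesis, for each maximal ideal $\m$ there is, by the very definition of $u$-$(R\setminus\m)$-torsion, a \emph{single} element $s_\m\in R\setminus\m$ with $s_\m E=0$; that is, $s_\m\in\Ann_R(E)\setminus\m$. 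Consequently $\Ann_R(E)\not\subseteq\m$ for every $\m\in\Max(R)$, so the ideal $\Ann_R(E)$ is contained in no maximal ideal and must therefore equal $R$. Hence $1\in\Ann_R(E)$ and $E=0$. As $M$ was arbitrary, $\Ext_R^1(P,M)=0$ for all $M$, which yields $(1)$.

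The point to get right — what I would flag as the crux rather than a genuine obstacle — is recognizing that the argument hinges on the uniform annihilator guaranteed by the definition of $u$-$S$-torsion. It is precisely this uniformity that hands us an honest annihilator element outside each $\m$, rather than merely a pointwise (element-by-element) annihilation, and thereby converts the family of local hypotheses into the global statement $\Ann_R(E)=R$ in a single line. (One could alternatively observe that $u$-$(R\setminus\m)$-torsion forces $E_\m=0$ and then appeal to the standard fact that a module vanishing at all maximal localizations is zero; but the annihilator argument is cleaner and uses no localization machinery.)
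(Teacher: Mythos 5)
Your proof is correct and takes essentially the same route as the paper: both handle $(3)\Rightarrow(1)$ via the Ext-characterization of $u$-$S$-projectivity in Theorem \ref{s-proj-char}, extracting for each $\m\in\Max(R)$ a single annihilating element $s_\m\in R\setminus\m$ of $\Ext_R^1(P,M)$ and concluding globally. The only cosmetic difference is phrasing: the paper says the ideal generated by $\{s_\m\mid\m\in\Max(R)\}$ is $R$, while you say $\Ann_R(\Ext_R^1(P,M))$ lies in no maximal ideal and hence equals $R$ --- the same argument, and your remark that the uniform annihilator is the crux matches the paper's intent exactly.
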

\begin{proof} $(1)\Rightarrow (2)\Rightarrow (3):$  Trivial.

 $(3)\Rightarrow (1):$ Let $M$ be an $R$-module. Then $\Ext^1_R(P,M)$ is $(R\setminus\m)$-torsion. Thus for any $\m\in \Max(R)$, there exists  $s_{\m}\in R\setminus\m$ such that $s_{\m}\Ext^1_R(P,M)=0$. Since the ideal generated by  $\{s_{\m}\mid \m\in \Max(R)\}$ is $R$, we have $\Ext^1_R(P,M)=0$. So $P$ is projective.
\end{proof}

\section{$u$-$S$-semisimple modules and $u$-$S$-semisimple rings}


Let $R$ be a ring. Recall from \cite{R09} that an $R$-module $M$ is semisimple provided that it is a direct sum of simple modules. By \cite[Proposition 4.1]{R09} an $R$-module $M$ is semisimple if and only if every submodule is a direct summand of $M$. So $M$ is semisimple if and only if any short exact sequence $0\rightarrow A\rightarrow M\rightarrow C\rightarrow 0$ is split. Utilizing this characterization,  we introduce the notion of $u$-$S$-semisimple module.

\begin{definition}
Let $R$ be a ring and  $S$ a multiplicative subset of $R$. An $R$-module $M$ is called $u$-$S$-semisimple provided that any $u$-$S$-short exact sequence $0\rightarrow A\rightarrow M\rightarrow C\rightarrow 0$ is $u$-$S$-split.
\end{definition}

Obviously, $u$-$S$-torsion modules are $u$-$S$-semisimple. Certainly, the class of $u$-$S$-semisimple modules is closed under $u$-$S$-isomorphisms. We can deduce that semisimple modules are also $u$-$S$-semisimple from the following lemma.
\begin{lemma}\label{s-semisimple-exac} An $R$-module $M$ is $u$-$S$-semisimple if and only if  any short exact sequence $0\rightarrow L\rightarrow M\rightarrow N\rightarrow 0$ is $u$-$S$-split.
\end{lemma}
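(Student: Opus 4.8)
The plan is to prove the two implications separately; the forward direction ("only if") is immediate and the reverse direction ("if") requires reducing an abstract $u$-$S$-short exact sequence to a genuine one. For necessity, observe that every short exact sequence $0\to L\to M\to N\to 0$ is in particular a $u$-$S$-short exact sequence, since one may take $s=1$ in the definition of $u$-$S$-exactness. Hence if $M$ is $u$-$S$-semisimple, each such sequence is $u$-$S$-split directly by definition, and no argument is needed.

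For sufficiency, I would assume every genuine short exact sequence with middle term $M$ is $u$-$S$-split and take an arbitrary $u$-$S$-short exact sequence $\xi\colon 0\to A\xrightarrow{f} M\xrightarrow{g} C\to 0$. By Definition \ref{s-split-s-exact-sequence}, the goal is to produce $s\in S$ and an $R$-homomorphism $f'\colon M\to A$ with $f'\circ f=s\Id_A$. First I would replace $A$ by the honest submodule $\Im(f)$: since $0\to A\xrightarrow{f} M$ is $u$-$S$-exact, $\Ker(f)$ is $u$-$S$-torsion, so the corestriction $t\colon A\to \Im(f)$ is a $u$-$S$-isomorphism, and $f=i\circ t$ where $i\colon \Im(f)\hookrightarrow M$ is the inclusion. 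Next I would apply the hypothesis to the genuine short exact sequence $0\to \Im(f)\xrightarrow{i} M\xrightarrow{\pi} M/\Im(f)\to 0$, obtaining $p\colon M\to \Im(f)$ and $s_1\in S$ with $p\circ i=s_1\Id_{\Im(f)}$.

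To transport this retraction back to $A$, I would invoke Lemma \ref{l} to get a $u$-$S$-isomorphism $t'\colon \Im(f)\to A$ and $s_2\in S$ with $t'\circ t=s_2\Id_A$. Setting $f'=t'\circ p$ and $s=s_1 s_2$, the computation $f'\circ f=t'\circ p\circ i\circ t=s_1\,(t'\circ t)=s_1 s_2\Id_A=s\Id_A$ finishes the proof, using that $S$ is multiplicatively closed so that $s\in S$.

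The main obstacle is conceptual rather than computational: the splitting supplied by the hypothesis lives on the concrete submodule $\Im(f)$, whereas the $u$-$S$-split condition for $\xi$ demands a retraction onto the abstract module $A$. Lemma \ref{l} is precisely the tool that moves the retraction from $\Im(f)$ to $A$ at the cost of an element of $S$, and the multiplicative closure of $S$ is what lets me absorb the two scalars $s_1$ and $s_2$ into a single $s$. Note also that only the left-hand $u$-$S$-exactness of $\xi$ (that $f$ is a $u$-$S$-monomorphism) is used, so the middle-exactness condition plays no role here.
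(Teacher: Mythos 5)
Your proof is correct, and it takes the mirror-image route to the paper's. The paper argues on the epimorphism side: it applies the hypothesis to the genuine exact sequence $0\rightarrow \Ker(g)\rightarrow M\xrightarrow{g_1} \Im(g)\rightarrow 0$, obtains (via Lemma \ref{s-split}) a section $g_1'$ of $g_1$, transports it along the $u$-$S$-isomorphism $\Im(g)\hookrightarrow C$ using Lemma \ref{l} to get $g'$ with $g\circ g'=ss'\Id_C$, and then must invoke Lemma \ref{s-split} again to conclude that $\xi$ is $u$-$S$-split, because Definition \ref{s-split-s-exact-sequence} is stated in terms of a \emph{left} retraction. You instead work on the monomorphism side: applying the hypothesis to $0\rightarrow \Im(f)\rightarrow M\rightarrow M/\Im(f)\rightarrow 0$ yields a retraction $p$ directly, and Lemma \ref{l}, applied to the corestriction $t:A\rightarrow \Im(f)$ (a $u$-$S$-isomorphism precisely because $\Ker(t)=\Ker(f)$ is $u$-$S$-torsion, which is the left-end $u$-$S$-exactness of $\xi$), transports it to $f'=t'\circ p$ with $f'\circ f=s_1s_2\Id_A$. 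This verifies Definition \ref{s-split-s-exact-sequence} on the nose and bypasses Lemma \ref{s-split} entirely, so your argument is somewhat more economical and self-contained; correspondingly, you use only the condition that $f$ is a $u$-$S$-monomorphism, whereas the paper's route uses only that $\Coker(g)$ is $u$-$S$-torsion. Your closing remark about which part of $u$-$S$-exactness is actually needed is accurate and is a genuine observation not made in the paper.
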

\begin{proof}  The ``only if part'' is clear. To prove the converse, let  $0\rightarrow A\xrightarrow{f} M\xrightarrow{g} C\rightarrow 0$ be a $u$-$S$-short exact sequence. Consider the natural exact sequence $0\rightarrow \Ker(g)\rightarrow M\xrightarrow{g_1} \Im(g)\rightarrow 0$. Then there is an $R$-homomorphism $g_1':\Im(g)\rightarrow M$ and $s\in S$ such that $g_1\circ g_1'=s\Id_{\Im(g)}$. Let $i:\Im(g)\rightarrow C$ be the embedding map. Then by Lemma \ref{l}, there exists a $u$-$S$-isomorphism $j:C\rightarrow \Im(g)$ such that $i\circ j=s'\Id_C$ for some $s'\in S$. Setting $g'= g_1'\circ j$, we have  $g\circ g'=ss'\Id_{C}$. So  the $u$-$S$-short exact sequence  $0\rightarrow A\xrightarrow{f} M\xrightarrow{g} C\rightarrow 0$ is $u$-$S$-split by Lemma \ref{s-split}.
\end{proof}

\begin{proposition}\label{s-semisimple-s-exact} Let  $0\rightarrow A\xrightarrow{f} B\xrightarrow{g} C\rightarrow 0$  be a $u$-$S$-short exact sequence. If $B$ is $u$-$S$-semisimple, then $A$ and $C$ are $u$-$S$-semisimple.
\end{proposition}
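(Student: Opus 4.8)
The plan is to reduce the problem, via Lemma \ref{s-semisimple-exac}, to testing $u$-$S$-semisimplicity of $A$ and of $C$ only on \emph{ordinary} short exact sequences having $A$ (respectively $C$) as middle term, and then to transport the resulting splitting data across the structural maps of the given sequence. Write the hypothesis as a $u$-$S$-short exact sequence $0\rightarrow A\xrightarrow{f} B\xrightarrow{g} C\rightarrow 0$, so that $f$ is a $u$-$S$-monomorphism and $g$ is a $u$-$S$-epimorphism; the retraction for $A$ will be pushed forward along $f$, and the section for $C$ will be pushed forward along $g$, using throughout that $B$ is $u$-$S$-semisimple.

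For $A$, I would start from an arbitrary short exact sequence $0\rightarrow L\xrightarrow{\alpha} A\xrightarrow{\beta} N\rightarrow 0$ and form the composite $f\circ\alpha\colon L\rightarrow B$. Since $\alpha$ is a monomorphism and $f$ is a $u$-$S$-monomorphism, $f\circ\alpha$ is again a $u$-$S$-monomorphism, so $0\rightarrow L\xrightarrow{f\circ\alpha} B\rightarrow \Coker(f\circ\alpha)\rightarrow 0$ is a $u$-$S$-short exact sequence with middle term $B$. Because $B$ is $u$-$S$-semisimple, this sequence is $u$-$S$-split, giving $s\in S$ and an $R$-homomorphism $r\colon B\rightarrow L$ with $r\circ(f\circ\alpha)=s\,\Id_L$. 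Setting $r'=r\circ f\colon A\rightarrow L$ then yields $r'\circ\alpha=s\,\Id_L$, so the original sequence is $u$-$S$-split and $A$ is $u$-$S$-semisimple.

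For $C$, I would dually begin with a short exact sequence $0\rightarrow L\xrightarrow{\alpha} C\xrightarrow{\beta} N\rightarrow 0$ and form $\beta\circ g\colon B\rightarrow N$. As $g$ is a $u$-$S$-epimorphism and $\beta$ is an epimorphism, $\beta\circ g$ is a $u$-$S$-epimorphism, so $0\rightarrow \Ker(\beta\circ g)\rightarrow B\xrightarrow{\beta\circ g} N\rightarrow 0$ is $u$-$S$-short exact with middle term $B$. Using $u$-$S$-semisimplicity of $B$ together with the section criterion of Lemma \ref{s-split}, there are $t\in S$ and $\sigma\colon N\rightarrow B$ with $(\beta\circ g)\circ\sigma=t\,\Id_N$. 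Then $g\circ\sigma\colon N\rightarrow C$ satisfies $\beta\circ(g\circ\sigma)=t\,\Id_N$, and a second application of Lemma \ref{s-split} shows the sequence $0\rightarrow L\rightarrow C\rightarrow N\rightarrow 0$ is $u$-$S$-split; hence $C$ is $u$-$S$-semisimple.

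The only points requiring care, which I would verify explicitly, are that a composite of $u$-$S$-monomorphisms is again a $u$-$S$-monomorphism and that a composite of $u$-$S$-epimorphisms is again a $u$-$S$-epimorphism; both follow by chasing the torsion bounds, namely from $s\,\Ker(f)=0$ one gets $s\,\Ker(f\circ\alpha)=0$, and from $s\,\Coker(g)=0$ one gets $s\,\Coker(\beta\circ g)=0$. I expect the main obstacle to be purely bookkeeping: confirming that the auxiliary sequences built over $B$ are genuinely $u$-$S$-short exact, and that the multiplier supplied by $u$-$S$-semisimplicity of $B$ survives composition with $f$ or $g$. The latter causes no difficulty, since $f$ and $g$ are honest $R$-homomorphisms commuting with scalar multiplication, so no new element of $S$ is introduced beyond those already delivered by $B$ and by Lemma \ref{s-split}.
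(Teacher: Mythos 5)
Your proof is correct, and it differs from the paper's in a worthwhile way. The paper first reduces to an honestly exact ambient sequence by invoking closure of $u$-$S$-semisimple modules under $u$-$S$-isomorphisms, and then argues with pullback--pushout diagrams: for $A$ it forms the pushout $Z$ of $B\leftarrow A \rightarrow Y$, takes a section of $B\rightarrow Z$, and uses the universal property of the pullback square to manufacture a map $b'\colon Y\rightarrow A$ with $b\circ b'=s\,\Id_Y$; for $C$ it forms the pullback $N$ and splits $0\rightarrow N\rightarrow B\rightarrow K\rightarrow 0$. Your $C$-half is essentially the paper's argument stripped of the diagram, since your $\Ker(\beta\circ g)$ is exactly the paper's pullback $N$; but your $A$-half genuinely replaces the pushout construction with the direct observation that a retraction $r\colon B\rightarrow L$ of $f\circ\alpha$ restricts along $f$ to a retraction $r\circ f$ of $\alpha$, which is simpler and dualizes cleanly (retractions pull back along $f$, sections push forward along $g$). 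Moreover, your argument never uses $u$-$S$-exactness at $B$ at all: the $A$-half needs only that $f$ is a $u$-$S$-monomorphism and the $C$-half only that $g$ is a $u$-$S$-epimorphism, so you in fact prove the slightly stronger statement that $u$-$S$-semisimplicity descends along any $u$-$S$-monomorphism into $B$ and along any $u$-$S$-epimorphism out of $B$; as a byproduct you also bypass the paper's initial reduction step, whose justification (that $A$ and $C$ may be replaced by $\Ker(g)$ and $B/\Ker(g)$ up to $u$-$S$-isomorphism) the paper leaves implicit. The only points you flagged as needing verification --- that $f\circ\alpha$ with $\alpha$ a monomorphism inherits the torsion bound $s\Ker(f)=0$, that $\beta\circ g$ with $\beta$ an epimorphism inherits $s\Coker(g)=0$, and that the auxiliary sequences over $B$ are $u$-$S$-short exact (they are, with multiplier $1$ at the middle spot since kernel and image there agree on the nose) --- all check out, and your two uses of Lemma \ref{s-split} (converting the retraction from the definition of $u$-$S$-split into a section, and certifying $u$-$S$-splitness of $0\rightarrow L\rightarrow C\rightarrow N\rightarrow 0$ from the section $g\circ\sigma$) are exactly within its scope, as is the reduction to ordinary short exact sequences via Lemma \ref{s-semisimple-exac}, which the paper uses in the same way.
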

\begin{proof}

Let $a:X\rightarrow A$ be a $u$-$S$-monomorphism. Since $f$ and $a$ are $u$-$S$-monomorphisms, so is their composition  $f\circ a: X \rightarrow B$. Indeed, if $t\Ker(f)=0$ and $t'\Ker(a)=0$ with $t,t'\in S$, then $tt'\Ker(f\circ a)=0$. As $B$ is $u$-$S$-semisimple, $w\circ f\circ  a=s\Id_X$ for some  $R$-homomorphism $w:B\rightarrow X$ and some $s\in S$. So $0 \rightarrow X \rightarrow A\rightarrow  Y\rightarrow 0$ $u$-$S$-splits, where $Y=\Coker(a)$. Similarly, let $j:C\rightarrow K$ be a $u$-$S$-epimorphism. As $g$ and $j$ are $u$-$S$-epimorphisms, so is their composition  $j\circ g: B\rightarrow  K$. Indeed, if $tK\subseteq \Im(j)$  and $t'C\subseteq \Im(g)$  with $t,t'\in S$, then $tt'K\subseteq \Im(j\circ g)$. As $B$ is $u$-$S$-semisimple, $j\circ g\circ w=s\Id_K$ for some linear map $w:K\rightarrow B$ and some $s\in S$. Let $M=\Ker(j)$, then $0\rightarrow M\rightarrow C\rightarrow K \rightarrow 0$  $u$-$S$-splits by Lemma \ref{s-split}.

\end{proof}

Recall that  a ring $R$ is semisimple provided that $R$ is semisimple as an $R$-module. Note that a ring $R$ is semisimple if and only if any free  $R$-module is semisimple by \cite[Proposition 4.5]{R09}. To give a ``uniform'' version of semisimple rings, we define $u$-$S$-semisimple rings by considering all free  $R$-modules.
\begin{definition}
Let $R$ be a ring and  $S$ a multiplicative subset of $R$. $R$ is called a $u$-$S$-semisimple ring provided that any free  $R$-module is $u$-$S$-semisimple.
\end{definition}

Obviously, all semisimple rings are $u$-$S$-semisimple for any multiplicative subset $S$ of $R$. The next result gives various characterizations of $u$-$S$-semisimple rings.

\begin{theorem}\label{s-semisimple-char-}
Let $R$ be a ring and $S$ a multiplicative subset of $R$. Then the following statements are equivalent:
\begin{enumerate}
\item $R$ is a $u$-$S$-semisimple ring;
\item any $R$-module is $u$-$S$-semisimple;
\item any $u$-$S$-short exact sequence is $u$-$S$-split;
\item any short exact sequence is $u$-$S$-split;
\item $\Ext_R^1(M,N)=0$ is $u$-$S$-torsion for any $R$-modules $M$ and $N$;
\item any $R$-module is $u$-$S$-projective;
\item  any $R$-module is $u$-$S$-injective.
\end{enumerate}
\end{theorem}
\begin{proof} $(1)\Rightarrow (2)$:  Let $M$ be an $R$-module. There exists an exact sequence $0\rightarrow K\rightarrow F\rightarrow M\rightarrow 0$ with $F$ free $R$-module. By Proposition \ref{s-semisimple-s-exact}, $M$ is $u$-$S$-semisimple.

$(2)\Rightarrow (3)$:  Let $\xi: 0\rightarrow A\rightarrow B\rightarrow C\rightarrow 0$ be a $u$-$S$-short exact sequence. Since $B$ is  $u$-$S$-semisimple, the  $u$-$S$-short exact sequence $\xi$ is $u$-$S$-split.

$(3)\Rightarrow (2)$:  Let $M$ be an $R$-module and $0\rightarrow A\rightarrow M\rightarrow B\rightarrow 0$  a $u$-$S$-short exact sequence.  By $(3)$,  $0\rightarrow A\rightarrow M\rightarrow B\rightarrow 0$  is $u$-$S$-split. So $M$ is $u$-$S$-semisimple.

$(2)\Rightarrow (1)$ and $(3)\Rightarrow (4)$: Trivial.

$(4)\Rightarrow (6)$: Let $M$ be an $R$-module and $0\rightarrow K\rightarrow P\rightarrow M\rightarrow 0$ be a  short exact sequence with $P$ projective. Then $M$ is $u$-$S$-projective by Proposition \ref{s-proj-exac}.

$(5)\Leftrightarrow (6)$: This equivalence follows from Theorem 2.7.

$(5)\Leftrightarrow (7)$: This equivalence follows from \cite[Theorem 4.3]{QKWCZ21}.

$(6)\Rightarrow (3)$: Let $0\rightarrow N\rightarrow K\rightarrow M\rightarrow 0$ be a $u$-$S$-short exact sequence. Since $M$ is a $u$-$S$-projective module, then  $0\rightarrow N\rightarrow K\rightarrow M\rightarrow 0$ is $u$-$S$-split by Corollary \ref{s-proj-char-1}.
\end{proof}

\begin{corollary}\label{s-semisimple-char}
Let $R$ be a ring and $S$ a multiplicative subset of $R$. Suppose $R$ is a $u$-$S$-semisimple ring. Then $R$ is both $u$-$S$-Noetherian and  $u$-$S$-von Neumann regular. Consequently, there exists an element $s\in S$ such that for any ideal $I$ of $R$ there is an $R$-homomorphism $f_I: R\rightarrow I$ satisfying $f_I(i)=si$ for any $i\in I$.
\end{corollary}
\begin{proof}
 Let $\Gamma:=\{I\}_{I\unlhd R}$ be the set of all ideals of $R$. Considering the natural short exact sequence  $0\rightarrow \bigoplus\limits_{I\in \Gamma} I\xrightarrow{i} \bigoplus\limits_{\Gamma} R\xrightarrow{\pi} \bigoplus\limits_{I\in \Gamma} R/I \rightarrow 0$, we have an $R$-homomorphism $i':  \bigoplus\limits_{\Gamma} R\rightarrow \bigoplus\limits_{I\in \Gamma} I$ such that $i'\circ i=s\Id_{\bigoplus\limits_{I\in \Gamma} I}$ for some $s\in S$.  So the natural embedding map $\Im(i')\hookrightarrow \bigoplus\limits_{I\in \Gamma} I$ is a $u$-$S$-isomorphism. Thus the set $\Gamma:=\{I\}_{I\unlhd R}$ is uniformly $S$-finite (see \cite{QKWCZ21} for example) since the $I$-th component of  $\Im(i')$ is finitely generated for any ideal $I$ of $ R$. So $R$ is a uniformly $S$-Noetherian ring. Since any $R$-module is $u$-$S$-projective by $(4)\Rightarrow (6)$ of Theorem \ref{s-semisimple-char-}, we have $R$ is uniformly $S$-von Neumann regular by Proposition \ref{S-proj-flat}.

Let $\Gamma:=\{I\}_{I\unlhd R}$ be the set of all ideals of $R$. Since $R$ is uniformly $S$-Noetherian, there exists an element $s\in S$ such that for any ideal $I\in \Gamma$ there is a finitely generated sub-ideal $K$ of $I$ satisfying $sI\subseteq K$.  Since $R$ is uniformly $S$-von Neumann regular, there is an  element  $s'\in S$ such that for any finitely generated ideal $K$ of $R$ there is an idempotent $e\in K$ such that $s'(K/\langle e\rangle)=0$  by \cite[Theorem 3.13]{z21}. Let $f: R\rightarrow I$ be the $R$-homomorphism given by $f(1) = ss'e$. Then we have $f(i)=ss'i$ for any $i\in I$.
\end{proof}

Certainly, if $R$ is a $u$-$S$-semisimple ring, then $R$ is $u$-$S$-semisimple as an $R$-module. However, the following example shows that the converse does not hold in general.
\begin{example} Let $R=\mathbb{Z}$ be the ring of all integers and the multiplicative subset $S=\mathbb{Z}\setminus\{0\}$. Let $\langle n\rangle$ be the ideal  generated by $n\in\mathbb{Z}$, and consider the exact sequence $0\rightarrow \langle n\rangle\xrightarrow{i} \mathbb{Z} \rightarrow \mathbb{Z}/ \langle n\rangle\rightarrow 0$. Set $i':\mathbb{Z}\rightarrow \langle n\rangle$ to be the $\mathbb{Z}$-homomorphism satisfying $i'(1)=n$. Then $i'(i(m))=nm$ for any $m\in \langle n\rangle$. Thus $\mathbb{Z}$ is a $u$-$S$-semisimple $\mathbb{Z}$-module. Since $R$ is not uniformly $S$-von Neumann regular by \cite[Example 3.15]{z21},  $R$ is not a $u$-$S$-semisimple ring by Corollary \ref{s-semisimple-char}.
\end{example}

Moreover, the following result shows that any $u$-$S$-semisimple ring is in fact a semisimple ring in the case that $S$ is regular multiplicative subset of $R$, i.e., the multiplicative set $S$ is composed of  non-zero-divisors.

\begin{proposition}\label{s-vn-vn} Let $R$ be a ring and $S$ a regular multiplicative subset of $R$.
 Then $R$ is a $u$-$S$-semisimple ring if and only if  $R$ is  a semisimple ring.
\end{proposition}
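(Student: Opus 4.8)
The plan is to prove the two implications separately, leaning on the characterizations already in hand. The forward direction is immediate: it was observed just after the definition of $u$-$S$-semisimple rings that every semisimple ring is $u$-$S$-semisimple for an arbitrary multiplicative subset $S$, so only the converse needs an argument.

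For the converse, suppose $R$ is $u$-$S$-semisimple with $S$ regular. By Theorem \ref{s-semisimple-char} the ring $R$ is uniformly $S$-von Neumann regular, and the first (and main) step is to upgrade this to genuine von Neumann regularity using that the elements of $S$ are non-zero-divisors. By \cite[Theorem 3.13]{z21} there is a fixed $s'\in S$ such that every finitely generated ideal $K$ contains an idempotent $e$ with $s'(K/\langle e\rangle)=0$. Applying this to a principal ideal $K=aR$ I would write $e=au$ (since $e\in aR$) and $s'a=ew$ (since $s'a\in s'K\subseteq\langle e\rangle$). Multiplying $s'a=ew$ by $e$ and using $e^2=e$ gives $es'a=e^2w=ew=s'a$, that is $s'(ea)=s'a$; because $s'$ is a non-zero-divisor it may be cancelled, yielding $ea=a$. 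Then $a=ea=(au)a=a^2u$, so $a\in a^2R$. As $a$ was arbitrary, $R$ is von Neumann regular. I expect this passage from uniform to honest von Neumann regularity to be the crux of the proof: it is exactly where regularity of $S$ enters essentially, and the idempotent bookkeeping has to be carried out carefully.

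The next step is to note that von Neumann regularity now forces every element of $S$ to be a unit. Indeed, for $s\in S$ regularity gives $t\in R$ with $s=s^2t$, whence $s(1-st)=0$; since $s$ is a non-zero-divisor, $st=1$ and $s\in U(R)$. Thus $S\subseteq U(R)$.

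Finally I would exploit the fact that once $S$ consists of units the uniform $S$-theory collapses to the classical one: any $u$-$S$-torsion module is annihilated by a unit and hence is zero, so a $u$-$S$-split exact sequence is already split in the ordinary sense (the relation $f'\circ f=s\,\Id$ yields $(s^{-1}f')\circ f=\Id$). By Theorem \ref{s-semisimple-char} every short exact sequence of $R$-modules is $u$-$S$-split, and therefore, $S$ being contained in $U(R)$, every short exact sequence of $R$-modules splits. Consequently every $R$-module is projective, which is equivalent to $R$ being semisimple, completing the argument.
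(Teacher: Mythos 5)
Your proposal is correct, but it follows a genuinely different route from the paper's. The paper proves the converse with a two-line specialization trick: from characterization (9) of Theorem \ref{s-semisimple-char} it takes the uniform element $s\in S$ together with, for each ideal $I$, a homomorphism $f_I\colon R\rightarrow I$ satisfying $f_I(i)=si$, and applies this to the \emph{single} ideal $I=\langle s^2\rangle$: writing $f_I(1)=s^2r$, linearity gives $s^4r=s^2f_I(1)=f_I(s^2)=s^3$, and cancelling the non-zero-divisor $s^3$ yields $sr=1$, so $s$ is a unit; then $s^{-1}f_I$ retracts the inclusion $I\hookrightarrow R$ for \emph{every} ideal $I$, so every ideal is a direct summand and $R$ is semisimple (the paper leaves this last step implicit in its ``consequently''). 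You instead pass through characterization (8): you upgrade uniform $S$-von Neumann regularity to honest von Neumann regularity --- your idempotent bookkeeping is sound, since from $e=au$ and $s'a=ew$ one gets $es'a=e^2w=ew=s'a$, cancellation of the non-zero-divisor $s'$ gives $ea=a$, whence $a=ea=a^2u$ --- then deduce from $s=s^2t$ that every element of $S$ is a unit, and finally observe that with $S\subseteq U(R)$ the relation $f'\circ f=s\,\mathrm{Id}$ rescales to an honest splitting, so Theorem \ref{s-semisimple-char}(4) forces every short exact sequence to split. Both arguments hinge on the same mechanism (cancellation by a non-zero-divisor making an element of $S$ invertible), but the paper gets there with one well-chosen ideal and needs only item (9), whereas your longer route isolates a reusable intermediate fact --- for regular $S$, a uniformly $S$-von Neumann regular ring is actually von Neumann regular --- and makes explicit the collapse of the uniform $S$-theory to the classical one once $S$ consists of units. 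A small economy available at your final step: once $S\subseteq U(R)$, you could also conclude via (8) directly, since uniformly $S$-Noetherian then means Noetherian, and a Noetherian von Neumann regular ring is semisimple; but your splitting argument is equally valid.
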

\begin{proof} The ``if part'' is clear. To prove the converse, let  $R$ be a $u$-$S$-semisimple ring, where each element in $S$ is a non-zero-divisor. There exists an element $s\in S$ such that for any ideal $I$ of $R$ there is an $R$-homomorphism $f: R\rightarrow I$ satisfying $f(i)=si$ for any $i\in I$ by Corollary \ref{s-semisimple-char}.  Set $I=\langle s^2\rangle$. Then $s^2f(1)=f(s^2)=s^3$. Let $f(1)=s^2r\in I$ for some $r\in R$. Then $s^4r=s^3$. Since $s$ is a non-zero-divisor, we have $sr=1$ and thus $s$ is a unit. Consequently,  $R$ is  a semisimple ring.
\end{proof}

Suppose $R$ is  $u$-$S$-semisimple as an $R$-module for a regular multiplicative subset $S$ of $R$. We must note that  $R$ is not necessarily a semisimple ring.

\begin{example} Let $R$ be a non-field domain and $S=R\setminus\{0\}$ the set of all nonzero elements in $R$. Then $R$ is obviously not a semisimple ring. However, $R$ is $u$-$S$-semisimple as an $R$-module. Indeed, let $I$ be an nonzero ideal of $R$ and $0\not=s\in I$. Let $f: R\rightarrow I$ be an $R$-homomorphism satisfying $f(1)=s$. Then we have $f(i)=si$ for any $i\in I$. Hence $R$ is $u$-$S$-semisimple as an $R$-module by Lemma \ref{s-semisimple-exac}.
\end{example}

We also have the following direct product property of $u$-$S$-semisimple rings.
\begin{proposition}\label{dird-s-semp} Let $R = R_1\times R_2$ be direct product of rings $R_1$ and $R_2$ and $S = S_1\times S_2$ a direct product of multiplicative subsets of $R_1$ and $R_2$. Then $R$ is a $u$-$S$-semisimple ring if and only if $R_i$ is a $u$-$S_i$-semisimple ring for each $i=1, 2$.
\end{proposition}
\begin{proof}
 Follows by Proposition \ref{dird-sp} and Theorem \ref{s-semisimple-char-}.
\end{proof}

The following non-trivial example shows that the condition that ``$S$ is a regular multiplicative subset
of $R$'' in Proposition \ref{s-vn-vn} cannot be removed.

\begin{example}\label{us-sim-not-sem} Let $R_1$ be a semi-simple ring and $R_2$ a non-semi-simple ring. Denote by $R = R_1\times R_2$. Then R is not a semi-simple ring. Set $S = \{(1, 1),(1, 0)\}$
which is a multiplicative subset of $R$. Then $R$ is trivially a $u$-$S$-semi-simple ring by
Proposition \ref{dird-s-semp}.
\end{example}

Let $\p$ be a prime ideal of $R$. We say a ring $R$ is (simply) a \emph{$u$-$\p$-semisimple ring} provided  $R$ is a $u$-$(R\setminus\p)$-semisimple ring. The final result gives a new characterization of semisimple rings.
\begin{proposition}\label{s-semi-loc-char}
Let $R$ be a ring. Then the following statements are equivalent:
 \begin{enumerate}
\item  $R$ is a semisimple ring;
\item   $R$ is a $u$-$\p$-semisimple ring for any $\p\in \Spec(R)$;
\item   $R$ is a $u$-$\m$-semisimple ring for any $\m\in \Max(R)$.
 \end{enumerate}
\end{proposition}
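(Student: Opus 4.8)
The plan is to prove the cycle $(1)\Rightarrow(2)\Rightarrow(3)\Rightarrow(1)$, with the first two implications being purely formal and essentially all the content concentrated in $(3)\Rightarrow(1)$. For $(1)\Rightarrow(2)$, I would simply invoke the observation recorded just after the definition of $u$-$S$-semisimple rings: a semisimple ring is $u$-$S$-semisimple for \emph{every} multiplicative subset $S$. Applying this to $S=R\setminus\p$ shows that $R$ is $u$-$\p$-semisimple for every $\p\in\Spec(R)$. For $(2)\Rightarrow(3)$, every maximal ideal is prime, so the implication is immediate.

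The real work is $(3)\Rightarrow(1)$, and I would model it directly on the proof of Proposition \ref{s-flat-loc-char}. First I would translate the hypothesis into an $\Ext$ statement via Theorem \ref{s-semisimple-char}: for each $\m\in\Max(R)$, the assertion that $R$ is $u$-$\m$-semisimple means, by the equivalence $(1)\Leftrightarrow(5)$ of that theorem applied with $S=R\setminus\m$, that $\Ext_R^1(M,N)$ is $u$-$(R\setminus\m)$-torsion for all $R$-modules $M$ and $N$. Spelled out, after fixing a pair $M,N$, for each maximal ideal $\m$ there is an element $s_\m\in R\setminus\m$ with $s_\m\Ext_R^1(M,N)=0$.

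Next I would run the standard local-to-global gluing argument. The family $\{s_\m\mid \m\in\Max(R)\}$ generates an ideal $J$ of $R$ that lies in no maximal ideal: if $J\subseteq\n$ for some $\n\in\Max(R)$, then in particular $s_\n\in\n$, contradicting $s_\n\notin\n$. Hence $J=R$, so $1=\sum_{k}r_k s_{\m_k}$ for finitely many indices. Multiplying $\Ext_R^1(M,N)$ by $1$ written in this form annihilates it, giving $\Ext_R^1(M,N)=0$. Since $M$ and $N$ were arbitrary, $\Ext_R^1(M,N)=0$ for all $R$-modules, which is exactly the criterion for $R$ to be semisimple (equivalently, it is the content of equivalence $(5)$ of Theorem \ref{s-semisimple-char} for the trivial multiplicative set, where $u$-$S$-torsion degenerates to being zero). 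This establishes $(1)$ and closes the cycle.

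The one point where I would take care — and the only genuine obstacle — is the order of quantifiers in the passage from the local annihilators $s_\m$ to the global conclusion. Because $R$ is not assumed to be $u$-$S$-semisimple for any single multiplicative set $S$, I cannot extract one element $s$ that works simultaneously for all maximal ideals; I must fix the pair $(M,N)$ \emph{first} and only then glue over $\Max(R)$ using comaximality. Once the quantifiers are arranged this way, the argument is identical in spirit to Proposition \ref{s-flat-loc-char} and presents no further difficulty.
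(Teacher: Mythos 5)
Your proof is correct and takes essentially the same route as the paper's: the same cycle $(1)\Rightarrow(2)\Rightarrow(3)\Rightarrow(1)$, with all the content in $(3)\Rightarrow(1)$ resting on Theorem \ref{s-semisimple-char} plus the comaximality gluing of the local annihilators $s_\m$ of $\Ext^1_R(M,N)$. The only difference is cosmetic: the paper routes $(3)\Rightarrow(1)$ through ``every module is $u$-$\m$-projective for all $\m\in\Max(R)$, hence projective by Proposition \ref{s-flat-loc-char}, hence $R$ is semisimple'', whereas you inline that proposition's gluing argument directly at the level of $\Ext_R^1(M,N)$ --- and your care with the quantifier order (fixing the pair $(M,N)$ before choosing the elements $s_\m$) is exactly the arrangement used in that proposition's proof.
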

\begin{proof} $(1)\Rightarrow (2):$  Let $P$ be an $R$-module and $\p\in \Spec(R)$. Then $P$ is projective, and thus is $u$-$\p$-projective. So $R$ is a $u$-$\p$-semisimple ring by Theorem \ref{s-semisimple-char-}.

 $(2)\Rightarrow (3):$  Trivial.

 $(3)\Rightarrow (1):$  Let $M$ be an $R$-module. Then $M$ is  $u$-$\m$-projective for any $\m\in \Max(R)$. Thus $M$ is projective by Proposition \ref{s-flat-loc-char}. So $R$ is a semisimple ring.
\end{proof}

\begin{acknowledgement}\quad\\
The authors would like to thank the reviewers for many valuable suggestions.
The first author was supported by  the National Natural Science Foundation of China (No. 12061001).
\end{acknowledgement}

\end{document}